\newtheorem{theorem}{Theorem}
\newtheorem{lemma}{Lemma}
\def\be{\begin{equation}}
\def\ee{\end{equation}}
\def\ben{\begin{displaymath}}
\def\een{\end{displaymath}}
\def\baa{\begin{eqnarray}}
\def\eaa{\end{eqnarray}}
\def\ba{\begin{array}}
\def\ea{\end{array}}
\renewcommand{\leq}{\leqslant}
\renewcommand{\geq}{\geqslant}
\newcommand{\supp}{\operatorname{supp}}
\newcommand{\Tr}{\operatorname{Tr}}
\newcommand{\Res}{\operatorname{Res}}
\newcommand{\Vol}{\operatorname{Vol}}
\newcommand{\Id}{\operatorname{Id}}
\newcommand{\Det}{\operatorname{Det}}
\begin{document}
\title {On Determinants of Laplacians on Compact Riemann Surfaces Equipped with Pullbacks of  Conical Metrics by Meromorphic Functions}

\author{Victor Kalvin \footnote{{\bf E-mail:  vkalvin@gmail.com, victor.kalvin@concordia.ca}}}
\date{Dec 12, 2017}
\maketitle

\vskip0.5cm
\begin{center}
Department of Mathematics and Statistics, Concordia
University, 1455 de Maisonneuve Blvd. West, Montreal, Quebec, H3G
1M8 Canada \end{center}

\vskip2cm
{\bf Abstract.}   Let  $\mathsf m$ be any  conical (or smooth) metric of finite volume on the Riemann sphere $\Bbb CP^1$. On a compact Riemann surface $X$ of genus $g$ consider a meromorphic funciton $f: X\to {\Bbb  C}P^1$   such that all poles and critical points of $f$ are simple and no critical value of $f$ coincides with a conical singularity of $\mathsf m$ or $\{\infty\}$.  The pullback $f^*\mathsf m$ of $\mathsf m$ under $f$ has conical singularities of  angles $4\pi$ at the  critical points of $f$ and  other conical singularities that are the preimages of those of $\mathsf m$.    We study the $\zeta$-regularized determinant $\Det' \Delta_F$ of the (Friedrichs extension of) Laplace-Beltrami  operator on $(X,f^*\mathsf m)$ as a functional on the moduli space of pairs $(X, f)$ and  obtain an explicit formula for  $\Det' \Delta_F$. 
\vskip2cm

\section{Introduction}\label{intro}

The problem of explicit evaluation of the determinants of Laplacians on Riemann surfaces has received considerable attention. 
For smooth metrics the determinants were thoroughly studied, see e.g.~\cite{DPh, Efrat,Sarnak, OPS}.  Over the past  decade significant progress was also achieved  for flat conical metrics, see e.g.~\cite{Au-Sal,HK,KKH-Communications,HKK,Khuri,KK-DG, ProcAMS,HK2}.  The problem of explicit evaluation of the determinants for conical metrics of constant positive curvature has also attracted some attention~\cite{Sp,Klevtsov,KKIMRN}. In this paper we derive an explicit formula for the determinant of Laplacian corresponding to the pullback of a finite volume conical metric  on the Riemann sphere by a meromorphic function.  In particular, the corresponding flat, constant positive curvature, and hyperbolic  conical metrics  are  included into consideration.

Let $X$ be a compact Riemann surface $X$ of genus $g$.  Following~\cite{Troyanov Polar Coordinates}, we say that $\mu$ is a  conical metric on $X$  if for any point $P\in X$ there exist a neighbourhood $U$ of $P$, a local parameter $x\in \Bbb C$, and a real-valued function $\varphi\in L^1(U)$ 
  such that $x(P)=0$, $\mu=e^{2\varphi}|x|^{2\beta}|dx|^2$ in $U$ with some $\beta>-1$, and $\partial_x\partial_{\bar x} \varphi\in L^1(U)$. If $\beta=0$, then the point $P$ is regular. If $\beta\neq 0$, then $P$ is a conical singularity of total angle $2\pi(\beta+1)$. A function $K:X\to \Bbb R$ defined by
 $$
K=e^{-2\varphi}|x|^{-2\beta}(-4\partial_x\partial_{\bar x} \varphi)
$$    
 is the curvature of $\mu$ in the neighbourhood $U$ ($K$ does not depend on the choice of $x$).

  Consider a conical metric $\mathsf m =\rho(z,\bar z)|dz|^2$ on the Riemann sphere ${\Bbb  C}P^1$ with conformal factor  
$$
\rho(z,\bar z)=e^{2u(z,\bar z)}\prod_{j=1}^n |z-p_j|^{2\beta_j},
$$ where $n\geq 0$, $\beta_j\in (-1,0)\cup(0,\infty)$ and $u\in C({\Bbb  C}P^1)$ is  a real-valued function.  At $z=p_j$ the metric $\mathsf m$ has a conical singularity of total angle $2\pi(\beta_j+1)$. We define the curvature  $K(z,\bar z)$ of $\mathsf m$ by the formula 
\begin{equation*}
K=e^{-2u}\prod_{j=1}^n |z-p_j|^{-2\beta_j}(-4 \partial_z\partial_{\bar z} u)
\end{equation*}
and  assume that   $K\in C^\infty(\Bbb CP^1)$ and $\Vol_{\mathsf m}(\Bbb C P^1)<\infty$.

 Consider  a meromorphic function $f: X\to {\Bbb  C}P^1$  of degree $N$  with simple poles and simple critical points. Assume that the critical values of $f$ are finite and do not coincide with conical singularities of $\mathsf m$.  Then the pullback $f^*\mathsf m$ of $\mathsf m$ by $f$ is a conical metric with $M=2N+2g-2$ conical singularities of  angles $4\pi$ at the critical points of $f$ and $N\times n$ conical singularities of angles $2\pi(\beta_j+1)$ at the preimages $f^{-1}(p_j)$  of the conical points $z=p_j$ of $\mathsf m$. It is easy to see that the curvature of $f^*\mathsf m$ is smooth and at  $P\in X$  it coincides with the curvature of  $\mathsf m$ at $z=f(P)$.  In particular, if $\mathsf m$ is a conical metric of constant curvature on $\Bbb CP^1$, then  $f^*\mathsf m$ is a conical metric of the same constant curvature on $X$.

We study the $\zeta$-regularized determinant $\Det' \Delta_F$ of the (Friedrichs extension of) Laplace-Beltrami  operator on $(X,f^*\mathsf m)$ as a functional on the Hurwitz moduli space $H_{g,N}(1,\dots,1)$ of pairs $(X, f)$. The space $H_{g,N}(1,\dots,1)$  (here $1$ is repeated $N$ times, i.e.  there are exactly $N$ distinct preimages $f^{-1}(\infty)$ of the point $\{\infty\}\in\Bbb CP^1$)  is a connected complex manifold locally  coordinatized by the critical values $z_1,\dots,z_M$ of the function $f$. The main result of this paper is the following  explicit formula 
\begin{equation}\label{teorema}
{\Det}'\Delta_F=C\,{\det}\Im {\mathbb B}\, |\tau|^2 \prod_{k=1}^M\sqrt[8]{\rho(z_k,\bar z_k)}.
\end{equation}
Here $C$ is a constant independent of the point $(X,f)$  of $H_{g,N}(1,\dots,1)$,  $ {\mathbb B}$ is the matrix of $b-$periods of the Riemann surface $X$ (for genus zero surface $X$ the factor ${\det}\Im {\mathbb B}$ should be omitted), $\tau$ stands for the Bergman tau-function on $H_{g,N}(1,\dots,1)$, and $\rho(z,\bar z)$ is the conformal factor of the metric $\mathsf m$ on $\Bbb CP^1$.  For the explicit expressions of $\tau$ in terms of  basic objects on the Riemann surface (prime form, theta-functions, etc.) and the divisor of the meromorphic differential  $df$, we refer the interested readers to~\cite{KK1,KS} for genus zero  and one surfaces,  and to~\cite{KK,KKZ} if $g>1$.

As a simple illustrating example consider  the map $f:\Bbb CP^1_w\to \Bbb CP^1_z$, where 
$$
z=f(w):=\frac {z_2w^2+z_1(z_2-z_1)}{w^2+z_2-z_1}.
$$ 
The function $f$ has simple critical points at $w=0$ and $w=\infty$, the corresponding critical values are $z_1$ and $z_2$. 
We have  $\tau(z_1,z_2)=\sqrt[4]{z_1-z_2}$  (see~\cite[f-la (3.38)]{KK1}) and~\eqref{teorema} takes the form
\begin{equation}\label{simple}
\Det'\Delta_F=C \sqrt{|z_1-z_2|}\sqrt[8]{\rho(z_1,\bar z_1)\cdot \rho(z_2,\bar z_2)}. 
\end{equation}
Let, for instance, $\mathsf m$ be the hyperbolic metric on $\Bbb CP^1_z$ with three conical points at  $z=p_j$ of angles $2\pi(\beta_j+1)$ satisfying   $\beta_1+\beta_2+\beta_3<-2 $ and  $\beta_j\in(-1,0)$.  Then the  hyperbolic metric $f^*\mathsf m$ on $\Bbb CP^1_w$ has conical singularities of angles  $2\pi(\beta_j+1)$ at 
$$
w=\pm\sqrt{(z_1-z_2)\frac{z_1-p_j}{z_2-p_j}}, \quad j=1,2,3, 
$$
 and two conical points of angles $4\pi$ at $w=0$ and $w=\infty$. Substituting the corresponding explicit formula~\cite{KRS} for $\rho(z,\bar z)$  into~\eqref{simple}  we obtain an explicit formula for the determinant of $\Delta_F$ on   $(\Bbb CP^1_w, f^*\mathsf m)$ with a  constant  $C$ that does not depend on the parameters $z_1$ and $z_2$ in $\Bbb C P_z\setminus\{p_1,p_2,p_3\}$, $z_1\neq z_2$. 

Let us also notice that  by setting $\rho(z,\bar z)=4(1+|z|^2)^{-2}$ in~\eqref{teorema} (i.e. by  taking the standard curvature $1$ metric as the metric $\mathsf m$ on the Riemann sphere $\Bbb CP^1$) we immediately obtain the main result of the recent paper~\cite{KKIMRN}. 

The plan of this paper is as follows. In Section~\ref{Psec2} entitled ``Preliminaries''  we   
list some  known results on asymptotic expansions of solutions near conical singularities at the critical points $P_k$ of $f$.  In Section~\ref{Psec3}, which is the core of the present paper,  we find an explicit formula for a coefficient that is responsible for the metric dependent factor $\prod_{k=1}^M\sqrt[8]{\rho(z_k,\bar z_k)}$ in  the explicit formula~\eqref{teorema} for $\Det'\Delta_F$.  In Section~\ref{Psec4} we study dependance of the eigenvalues of  $\Delta_F$ on the moduli parameters $z_1,\dots, z_M$. Finally, in Section~\ref{Psec5} we first define the modified zeta regularized determinant ${\Det}'\Delta_F$ of the Friedrichs Laplacian on $(X,f^*\mathsf m)$ and then prove the  formula~\eqref{teorema}.

\section{ Preliminaries}\label{Psec2}
 
Let $\Delta_F$ stand for the Friedrichs selfadjoint extension of the Laplace-Beltrami operator $\Delta$  on $(X,f^*\mathsf m)$. In this subsection we list some results on asymptotic expansions at  the critical points of $f$ for functions in the domain of $\Delta_F$ .
 These results  are  similar to those that previously appeared in the context of flat conical metrics~\cite{HK,HKK} and  standard curvature $1$ metric on $\Bbb CP^1$~\cite{KKIMRN}. We only state the results, for the proofs we refer to~\cite[Sec. 2]{KKIMRN} as they can be repeated here almost verbatim.  
 
In a vicinity of  a critical point $P_k$, $k=1,\dots,M$, of $f:X\to \Bbb CP^1$ we introduce  the {\it distinguished} local parameter 
\begin{equation}\label{LocPar}
x=\sqrt{f(P)-f(P_k)}=\sqrt{z-z_k}.
\end{equation}
Since the critical value $z_k=f(P_k)$ of $f$ does not coincide with any conical singularity $p_j$ of $\mathsf m$, we have $f^*\mathsf m({x},\bar x)=4 \hat \rho(x,\bar x )|x|^2|dx|^2$ with smooth near $x=0$ function $\hat \rho(x,\bar x)= \rho(x^2+z_k,
\bar x^2+\bar z_k)$, where $\rho(z,
\bar z)$ is the conformal factor of $\mathsf m$.  Thus $f^*\mathsf m$ has a conical singularity of angle $4\pi$  at $P_k$  (i.e. at $x=0$).

\begin{lemma}\label{expansion} Denote by $\mathscr D^*$ the domain of the operator in $L^2(X)$ adjoint  to the operator $\Delta$ defined on  $C^\infty_0(X\setminus\{P_1,\dots,P_k\})$. 
For $u\in \mathcal D^*$ in a small vicinity of ${x}=0$ we have
\begin{equation}\label{EU}
u({x},\bar {x})=a_{-1}\bar {x}^{-1} +b_{-1}{x}^{-1}+  a_0\ln|{x}|+ b_0+ a_1\bar {x} +b_1 {x}+ R({x},\bar {x}),
\end{equation}
where  $a_k$ and $b_k$ are some coefficients and the remainder $R$ satisfies $R({x},\bar {x})=O(|{x}|^{2-\epsilon})$ with  any $\epsilon>0$  as $x\to 0$.  Moreover,  the equality~\eqref{EU}  can be differentiated and   the remainder satisfies $\partial_{x} R({x},\bar {x})=O(|{x}|^{1-\epsilon})$ and $\partial_{\bar {x}} R({x},\bar {x})=O(|{x}|^{1-\epsilon})$ with  any $\epsilon>0$ if $\Delta^* u=\lambda u$.
\end{lemma}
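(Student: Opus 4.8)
The plan is to follow the cone analysis of~\cite[Sec.~2]{KKIMRN} essentially verbatim; the only feature absent there is that the conformal factor $\hat\rho$ is now an arbitrary smooth positive function rather than the one of the round sphere, and this makes no difference because a conformal change of metric by a smooth positive factor is harmless in dimension two. Work near a fixed $P_k$ in the distinguished parameter $x$ of~\eqref{LocPar}, so that $f^*\mathsf m=4\hat\rho(x,\bar x)|x|^2|dx|^2$ with $\hat\rho$ smooth and positive. Then $\Delta=-(\hat\rho|x|^2)^{-1}\partial_x\partial_{\bar x}$, hence for $u\in\mathcal D^*$ and $h:=\Delta^*u\in L^2(X,f^*\mathsf m)$ one has $\partial_x\partial_{\bar x}u=G$ in the punctured disc $\{0<|x|<\delta\}$, where $G:=-\hat\rho|x|^2 h$. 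Since $\dvol_{f^*\mathsf m}$ is comparable near $x=0$ to $|x|^2\,dA$ with $dA$ Lebesgue measure, the memberships $u,h\in L^2$ turn into the weighted bounds $\int_{|x|<\delta}|u|^2|x|^2\,dA<\infty$ and (using that $\hat\rho$ is bounded) $\int_{|x|<\delta}|G|^2|x|^{-2}\,dA<\infty$. Thus the whole problem reduces to the Poisson equation $\partial_x\partial_{\bar x}u=G$ on a punctured disc, with $u$ and $G$ in these weighted $L^2$-spaces, which is exactly the setting of the quoted reference.

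First I would separate variables: $u=\sum_{n\in\mathbb Z}u_n(r)e^{in\theta}$ and $G=\sum_{n\in\mathbb Z}G_n(r)e^{in\theta}$ (integer modes, $x$ being a genuine local coordinate), so that $u_n$ solves $u_n''+r^{-1}u_n'-n^2r^{-2}u_n=4G_n$. Its homogeneous solutions are $r^{\pm|n|}$ for $n\neq0$ and $1,\ln r$ for $n=0$; solving by variation of parameters and imposing the per-mode condition $\int_0^\delta|u_n|^2 r^3\,dr<\infty$ coming from $u\in L^2(X,f^*\mathsf m)$ forces the coefficient of $r^{-|n|}$ to vanish as soon as $|n|\geq 2$, while leaving free the coefficients of $r^{-1}e^{\pm i\theta}$, of $\ln r$, and of the two bounded homogeneous solutions. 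As $r^{\pm1}e^{\pm i\theta}\in\{x,\bar x,x^{-1},\bar x^{-1}\}$ and $\ln r=\ln|x|$, these free terms are precisely $a_{-1}\bar x^{-1}+b_{-1}x^{-1}+a_0\ln|x|+b_0+a_1\bar x+b_1 x$, the explicit part of~\eqref{EU}. (Equivalently: $f^*\mathsf m$ has a conical point of angle $4\pi$ at $P_k$, whose indicial exponents are the $|x|^{\pm|n|}e^{in\theta}$, and the six listed functions span $\mathcal D^*/\mathcal D_{\mathrm{min}}$ locally, $\mathcal D_{\mathrm{min}}$ being the closure of $\Delta|_{C^\infty_0}$.)

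The remainder $R$ is then the sum of the particular solutions over all $n$ together with the leftover homogeneous terms $\sum_{|n|\geq 2}A_n r^{|n|}e^{in\theta}$. Estimating the particular solutions by the Cauchy--Schwarz inequality with the weight $r^{-1}\,dr$ and the bound on $G_n$ gives $u_n^{\mathrm{part}}(r)=O\big(r^2(\ln(1/r))^{1/2}\|G_n\|\big)$ for the borderline index $|n|=2$ and $O(r^2\|G_n\|)$ for the others, where $\|G_n\|^2=\int_0^\delta|G_n|^2 r^{-1}\,dr$ and $\sum_n\|G_n\|^2<\infty$; likewise $u\in L^2$ yields $\sum_{|n|\geq 2}|A_n|^2\delta^{2|n|}<\infty$, whence $\sum_{|n|\geq 2}A_n r^{|n|}e^{in\theta}=O(|x|^2)$ for $|x|<\delta$. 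Summing, one gets $R=O(|x|^{2-\epsilon})$ for every $\epsilon>0$, the loss $\epsilon$ being produced exactly by the logarithm attached to the first forbidden indicial exponent $|x|^2e^{\pm 2i\theta}$. This already proves the first assertion for arbitrary $u\in\mathcal D^*$. The one place where care is genuinely needed is this summation over the Fourier index: converting the mode-by-mode estimates into a pointwise bound on the honest function $R$ requires uniform-in-$n$ control of all constants and square-summability of the Fourier data --- i.e.\ the Kondratiev/Mellin regularity theory for the Laplacian near a conical point --- and, $\hat\rho$ entering only as a smooth positive conformal factor, this is done in full detail in~\cite[Sec.~2]{KKIMRN}.

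For the refinement suppose $\Delta^*u=\lambda u$. Then $u$ solves the elliptic equation $\partial_x\partial_{\bar x}u=-\lambda\hat\rho|x|^2 u$ with smooth coefficients in $\{0<|x|<\delta\}$, so $u$ and hence $\partial_x u,\partial_{\bar x}u$ are smooth there, and since the six explicit terms lie in the kernel of $\partial_x\partial_{\bar x}$ one has $\partial_x\partial_{\bar x}R=-\lambda\hat\rho|x|^2 u=O(|x|)$ near $x=0$ (using $u=O(|x|^{-1})$). Applying the interior gradient estimate for $\partial_x\partial_{\bar x}$ on the annuli $\{2^{-j-1}<|x|<2^{-j}\}$, rescaled to unit size, to the bounds $R=O(|x|^{2-\epsilon})$ and $\partial_x\partial_{\bar x}R=O(|x|)$ yields $\partial_x R=O(|x|^{1-\epsilon})$ and $\partial_{\bar x}R=O(|x|^{1-\epsilon})$ for every $\epsilon>0$; equivalently one re-runs the mode analysis for the Fourier coefficients of $\partial_x R$ and $\partial_{\bar x}R$, the borderline mode again producing the arbitrarily small loss. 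Finally, each of the six explicit terms is smooth on $\{0<|x|<\delta\}$ with classical first derivatives, so~\eqref{EU} may be differentiated term by term, the error being $R$, $\partial_x R$, $\partial_{\bar x}R$ as just estimated. As indicated, the sole delicate ingredient --- the weighted-space bookkeeping underlying the two remainder bounds --- is precisely what~\cite[Sec.~2]{KKIMRN} supplies, which is why the proof can be repeated here almost verbatim.
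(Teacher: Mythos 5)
Your proposal is correct and follows the same route as the paper, which for this lemma simply defers to the separation-of-variables/weighted-space cone analysis of~\cite[Sec.~2]{KKIMRN}: Fourier decomposition in the distinguished parameter $x$, exclusion of the indicial solutions $r^{-|n|}e^{in\theta}$ with $|n|\geq 2$ by the $L^2$ condition, Cauchy--Schwarz bounds on the variation-of-parameters integrals for the remainder, and rescaled interior elliptic estimates for the differentiated statement. You also correctly identify that the only new feature here --- the general smooth conformal factor $\hat\rho$ in place of the round one --- is absorbed into the right-hand side $G=-\hat\rho|x|^2h$ and changes nothing, which is exactly why the paper states the proof ``can be repeated almost verbatim.''
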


For $u,v\in \mathscr D^*$   consider the form $
\mathsf q[u,v]:=(\Delta u,v)-(u,\Delta v)
$;
here and elsewhere $(\cdot,\cdot)$ stands for the inner product in ${L^2(X)}$.  By Lemma~\ref{expansion} we have~\eqref{EU} and
\begin{equation}\label{Ev}
v({x},\bar {x})=c_{-1}\bar {x}^{-1} +d_{-1}{x}^{-1}+  c_0\ln|{x}|+ d_0+ c_1\bar {x} +d_1 {x}+ \tilde R({x},\bar {x}).
\end{equation}
The Stokes theorem  implies $$
\mathsf q[u,v]=2i\lim_{\epsilon\to 0+}\oint_{|{x}|=\epsilon}(\partial_{{x}}u)\bar v\,d {{x}}+u(\partial_{\bar {x}} \bar v)d\bar {x},
$$
where we express the right hand side  in terms of the coefficients in~\eqref{EU},~\eqref{Ev} and obtain
\begin{equation}\label{q}
\mathsf q[u,v]=4\pi(-a_{-1}\bar d_1-b_{-1}\bar  c_1-b_0\bar c_0/2+ a_0\bar d_0/2+b_1\bar c_{-1}+a_1\bar d_{-1}).
\end{equation}

 For the domain $\mathscr D$ of the Friedrichs extension $\Delta_F$ we have $\mathscr D=\mathscr D^*\cap H^1(X)$, where $H^1(X)$ is the domain of the closed densely defined quadratic form of $\Delta$. Hence for any $u\in\mathscr D$ we have~\eqref{EU} with $a_{-1}=b_{-1}=a_{0}=0$.

For a sufficiently small $\delta>0$ we take a cut-off function $\chi\in C^\infty_c(X)$  supported in the neighbourhood $|x|<2\delta$ of $P_k$ and  such that $\chi(|x|)=1$ for $|x|<\delta$.  Denote the spectrum of $\Delta_F$ by $\sigma(\Delta_F)$ and introduce
\begin{equation}\label{Y}
Y(\lambda)=\chi x^{-1} -(\Delta_F-\lambda)^{-1}(\Delta-\lambda)\chi x^{-1}, \quad \lambda\notin \sigma(\Delta_F),
\end{equation}
where the function $\chi x^{-1}$ is extended from the support of $\chi$ to $X$ by zero.
It is clear that $Y(\lambda)\in \mathscr D^*$ and $Y(\lambda)\not =0$ as $\chi x^{-1}\notin\mathscr D$. 
By  Lemma~\ref{expansion}  we  have
\begin{equation}\label{expY}
Y(x,\bar x; \lambda)=x^{-1}+c(\lambda)+a(\lambda)\bar x+b(\lambda) x+ O(|x|^{2-\epsilon}),\quad x\to0,\quad \epsilon>0.
\end{equation}

\begin{lemma}\label{b(lambda)}  The function $Y(\lambda)$ in~\eqref{Y} and the coefficient $b(\lambda)$ in~\eqref{expY}  are analytic functions of $\lambda$ in $\Bbb C\setminus\sigma(\Delta_F)$ and in a neighbourhood of zero. Besides, we have
 \begin{equation}\label{refl}
 4\pi\frac{d}{d\lambda}b(\lambda)=\bigl(Y(\lambda),\overline{Y(\lambda)}\bigr).
 \end{equation}
 \end{lemma}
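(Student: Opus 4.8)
\emph{Plan.} The two assertions will be handled separately: first the analyticity of $Y(\lambda)$ and of $b(\lambda)$, then the differential identity \eqref{refl}, which I intend to read off directly from the boundary form \eqref{q}.

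\emph{Analyticity.} Write $R(\lambda)=(\Delta_F-\lambda)^{-1}$ and $g(\lambda)=(\Delta-\lambda)\chi x^{-1}$. Since $\chi x^{-1}$ coincides with $x^{-1}$ near $P_k$, is square integrable for $f^*\mathsf m$ (whose density is $4\hat\rho|x|^2$) and satisfies $\Delta x^{-1}=0$ off $x=0$, the vector $g(\lambda)$ lies in $L^2(X)$, is compactly supported, and depends affinely — hence analytically — on $\lambda$. Therefore $\lambda\mapsto R(\lambda)g(\lambda)$ is analytic on $\Bbb C\setminus\sigma(\Delta_F)$ with values in $\mathscr D$, and so $Y(\lambda)=\chi x^{-1}-R(\lambda)g(\lambda)$ is analytic there in $\mathscr D^*$. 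To cross $\lambda=0\in\sigma(\Delta_F)$ I would use that $0$ is a simple eigenvalue with eigenspace the constants $\mathbf 1$, so $R(\lambda)$ has a simple pole at $0$ with residue $h\mapsto-(h,\mathbf 1)\mathbf 1/(\mathbf 1,\mathbf 1)$; since $(g(\lambda),\mathbf 1)=(\Delta\chi x^{-1},\mathbf 1)-\lambda(\chi x^{-1},\mathbf 1)$ and $(\Delta\chi x^{-1},\mathbf 1)=\mathsf q[\chi x^{-1},\mathbf 1]=0$ by \eqref{q} (only $b_{-1}=1$ is nonzero for $\chi x^{-1}$, only $d_0=1$ for $\mathbf 1$), this residue is cancelled and $Y(\lambda)$ extends holomorphically across $0$. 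Finally, $b(\lambda)$ is recovered from $Y(\,\cdot\,;\lambda)$ by a contour integral over a small circle about $P_k$, e.g. $b(\lambda)=\lim_{r\to0}(2\pi i)^{-1}\oint_{|x|=r}\bigl(Y(x,\bar x;\lambda)-x^{-1}\bigr)x^{-2}\,dx$, hence it is analytic wherever $Y(\lambda)$ is.

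\emph{The identity \eqref{refl}.} The first step is an ODE for $Y$: differentiating $Y=\chi x^{-1}-R(\lambda)g(\lambda)$ in $\lambda$ and using $R'(\lambda)=R(\lambda)^2$, $g'(\lambda)=-\chi x^{-1}$ and $R(\lambda)g(\lambda)=\chi x^{-1}-Y(\lambda)$, a one-line computation gives $\tfrac{d}{d\lambda}Y(\lambda)=R(\lambda)Y(\lambda)$. Hence $W:=\tfrac{d}{d\lambda}Y(\lambda)\in\mathscr D$, it satisfies $(\Delta-\lambda)W=Y$, and — differentiating \eqref{expY} in $\lambda$ — near $P_k$ it has the expansion $W=c'(\lambda)+a'(\lambda)\bar x+b'(\lambda)x+O(|x|^{2-\epsilon})$ with no $x^{-1}$ term. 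Because the Laplace--Beltrami operator is real and $\Delta^*Y=\lambda Y$ (as $Y$ solves $(\Delta-\lambda)Y=0$ in $\mathscr D^*$), one has $\Delta^*\overline Y=\bar\lambda\,\overline Y$, so $(W,\Delta\overline Y)=\lambda(W,\overline Y)$ and therefore
\[\mathsf q[W,\overline Y]=(\Delta W,\overline Y)-(W,\Delta\overline Y)=\bigl((\Delta-\lambda)W,\overline Y\bigr)=(Y,\overline Y).\]
On the other hand $\overline Y=\bar x^{-1}+\overline{c(\lambda)}+\overline{a(\lambda)}x+\overline{b(\lambda)}\bar x+O(|x|^{2-\epsilon})$, so in the notation of \eqref{Ev} its only nonzero coefficients are $c_{-1}=1$, $d_0=\overline{c(\lambda)}$, $c_1=\overline{b(\lambda)}$, $d_1=\overline{a(\lambda)}$, while in the notation of \eqref{EU} those of $W$ are $b_0=c'(\lambda)$, $a_1=a'(\lambda)$, $b_1=b'(\lambda)$. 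Substituting into \eqref{q}, every term vanishes except $4\pi\,b_1\bar c_{-1}=4\pi\,b'(\lambda)$, so $(Y,\overline Y)=4\pi\,b'(\lambda)$, which is \eqref{refl}.

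\emph{Main obstacle.} The delicate point is the termwise $\lambda$-differentiation of the asymptotics \eqref{expY}: one must show that $W=\tfrac{d}{d\lambda}Y(\lambda)$ has exactly the expansion stated above, with coefficients $c'(\lambda),a'(\lambda),b'(\lambda)$ and a once-differentiable remainder still of order $|x|^{2-\epsilon}$. This amounts to running the local regularity analysis behind Lemma~\ref{expansion} for the inhomogeneous equation $\Delta W=\lambda W+Y$, whose right-hand side is only $O(x^{-1})$ near $P_k$; the natural route is to split off an explicit smooth particular solution absorbing the $x^{-1}$ contribution and then apply Lemma~\ref{expansion} to the homogeneous remainder. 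A secondary, minor point is $\lambda=0$: there one either invokes the residue cancellation above, or simply proves \eqref{refl} on $\Bbb C\setminus\sigma(\Delta_F)$ and extends it by analyticity of both sides.
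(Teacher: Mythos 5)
The paper gives no proof of this lemma: it states that the arguments of \cite[Sec.~2]{KKIMRN} carry over verbatim, so there is nothing in the text to compare line by line. Your argument is the standard one used in that reference and in \cite{HK,HKK}, and it is correct in all essentials: the resolvent identity $\tfrac{d}{d\lambda}Y(\lambda)=(\Delta_F-\lambda)^{-1}Y(\lambda)$, the Green/boundary-form computation via \eqref{q} with $u=W$, $v=\overline{Y}$ (only the $b_1\bar c_{-1}$ term survives), and the residue cancellation at $\lambda=0$ coming from $(\Delta\chi x^{-1},\mathbf 1)=\mathsf q[\chi x^{-1},\mathbf 1]=0$ are all exactly what is needed, and I verified each of these computations against \eqref{q}, \eqref{Y}, \eqref{expY}.

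The one point you rightly flag --- that $W=\tfrac{d}{d\lambda}Y$ has the expansion obtained by differentiating \eqref{expY} termwise in $\lambda$ --- can be closed more economically than by redoing the regularity analysis for the inhomogeneous equation. Since $W=(\Delta_F-\lambda)^{-1}Y\in\mathscr D\subset\mathscr D^*$, Lemma~\ref{expansion} already gives $W$ an expansion of the form \eqref{EU} with $a_{-1}=b_{-1}=a_0=0$; the only thing to prove is that its coefficient of $x$ equals $b'(\lambda)$. For that, use your own contour representation: for fixed small $r$ set $b_r(\lambda)=(2\pi i)^{-1}\oint_{|x|=r}\bigl(Y(\lambda)-x^{-1}\bigr)x^{-2}\,dx$. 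Interior elliptic estimates on an annulus around $P_k$ show $\lambda\mapsto Y(\lambda)$ is analytic with values in $C^0$ of that annulus, so $b_r$ is analytic and $b_r'(\lambda)=(2\pi i)^{-1}\oint W\,x^{-2}\,dx$, which tends to the $x$-coefficient of $W$ as $r\to0$; on the other hand $b_r\to b$ pointwise with local uniform boundedness, so $b_r'\to b'$ by Vitali (or Cauchy's formula in $\lambda$). This both proves analyticity of $b$ and identifies the coefficient, completing \eqref{refl}; alternatively, as you note, it suffices to prove \eqref{refl} off the spectrum and extend by analyticity.
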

\begin{lemma}\label{Eigenf} Let $\{\Phi_j\}_{j=0}^\infty$ be a complete set of real normalized eigenfunctions of $\Delta_F$ and let $\{\lambda_j\}_{j=0}^\infty$ be the corresponding eigenvalues, i.e. $\Delta_F\Phi_j=\lambda_j\Phi_j$, $\Phi_j=\overline{\Phi_j}$, and $\|\Phi_j; L^2(X)\|=1$. Then for the coefficients $a_j$ and  $b_j=\bar a_j $  in the asymptotic
\begin{equation}\label{Phi_j}
\Phi_j(x,\bar x)=c_j+ a_j\bar x+ b_j x+O(|x|^{2-\epsilon}),\quad x\to 0, \quad \epsilon>0,
\end{equation}
we have
\begin{equation}\label{a_j}
16\pi^2\sum_{j=0}^\infty \frac{b_j^2}{(\lambda_j-\lambda)^2}=\Bigl(Y(\lambda),\overline{Y(\lambda)}\Bigr),
\end{equation}
where  the series is absolutely convergent.
\end{lemma}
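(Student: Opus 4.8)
The plan is to obtain the eigenfunction expansion of $Y(\lambda)$ in closed form and then read off \eqref{a_j} from Parseval's identity. Set $g:=\chi x^{-1}$, extended by zero to all of $X$. Because the cone at $P_k$ has angle $4\pi$, the factor $|x|^{2}$ in $f^{*}\mathsf m=4\hat\rho|x|^{2}|dx|^{2}$ renders $|x^{-1}|^{2}$ integrable against the volume element near $x=0$, so $g\in L^{2}(X)$; on the other hand $\partial_{x}g\sim x^{-2}$ is not square integrable, so $g\in\mathscr D^{*}\setminus\mathscr D$. Moreover $(\Delta-\lambda)g\in L^{2}(X)$: near $P_k$ one has $g=x^{-1}$, which is annihilated by the Laplacian there, while in the collar $\delta<|x|<2\delta$ the function $\Delta g$ is smooth with compact support. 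Thus $Y(\lambda)$ in \eqref{Y} is a well-defined element of $L^{2}(X)$, and both $g$ and $(\Delta-\lambda)g$ have $L^{2}$-convergent expansions in the orthonormal basis $\{\Phi_{j}\}$.

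First I would evaluate the boundary pairing $\mathsf q[g,\Phi_{j}]$. In the distinguished parameter at $P_k$ the expansion of $g$ is simply $x^{-1}$, i.e. in the notation of \eqref{EU} every coefficient vanishes except $b_{-1}=1$, while that of $\Phi_{j}$ is \eqref{Phi_j}. Substituting these into \eqref{q} annihilates all terms but one and gives $\mathsf q[g,\Phi_{j}]=-4\pi\,\overline{a_{j}}=-4\pi b_{j}$. Since $\Phi_{j}\in\mathscr D$, so that $\mathsf q[g,\Phi_{j}]=(\Delta g,\Phi_{j})-(g,\Delta\Phi_{j})=(\Delta g,\Phi_{j})-\lambda_{j}(g,\Phi_{j})$, we deduce
\[
\bigl((\Delta-\lambda)g,\Phi_{j}\bigr)=(\lambda_{j}-\lambda)(g,\Phi_{j})-4\pi b_{j}.
\]

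Next I would apply the spectral theorem to $\Delta_{F}$ (which has discrete spectrum $\sigma(\Delta_{F})$):
\[
(\Delta_{F}-\lambda)^{-1}(\Delta-\lambda)g=\sum_{j}\frac{\bigl((\Delta-\lambda)g,\Phi_{j}\bigr)}{\lambda_{j}-\lambda}\,\Phi_{j}=\sum_{j}(g,\Phi_{j})\,\Phi_{j}-4\pi\sum_{j}\frac{b_{j}}{\lambda_{j}-\lambda}\,\Phi_{j}=g-4\pi\sum_{j}\frac{b_{j}}{\lambda_{j}-\lambda}\,\Phi_{j},
\]
the first sum reproducing $g$ by completeness of $\{\Phi_{j}\}$. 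Subtracting from $g$ as in \eqref{Y} yields the key identity $Y(\lambda)=4\pi\sum_{j}(\lambda_{j}-\lambda)^{-1}b_{j}\,\Phi_{j}$, the series converging in $L^{2}(X)$ (not pointwise near $P_k$, consistently with the $x^{-1}$ term in \eqref{expY}). Finally, taking the pairing $(\,\cdot\,,\overline{\,\cdot\,})$ and using that the $\Phi_{j}$ are real and orthonormal while $\overline{\lambda_{j}-\lambda}=\lambda_{j}-\bar\lambda$, all cross terms drop and one obtains $\bigl(Y(\lambda),\overline{Y(\lambda)}\bigr)=16\pi^{2}\sum_{j}b_{j}^{2}/(\lambda_{j}-\lambda)^{2}$, which is \eqref{a_j}; the convergence is absolute because $\sum_{j}|b_{j}|^{2}/|\lambda_{j}-\lambda|^{2}=\|Y(\lambda)\|_{L^{2}}^{2}/(16\pi^{2})<\infty$ by Parseval.

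The two expansion substitutions and the spectral bookkeeping are routine. The step that needs the most care — and the one in which the $4\pi$ cone angle is genuinely used — is the preliminary claim of the first paragraph, namely that $\chi x^{-1}$ belongs to $L^{2}(X)$ (so that it is an honest ``almost null mode'' with a bona fide expansion in $\{\Phi_j\}$) while at the same time $(\Delta-\lambda)\chi x^{-1}\in L^{2}(X)$; granting this, together with $\chi x^{-1}\in\mathscr D^{*}$ from the Preliminaries, the remainder of the argument is forced.
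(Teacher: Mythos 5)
Your proof is correct and follows essentially the same route as the paper's (the paper omits the argument, deferring to \cite{KKIMRN}, where the identity $Y(\lambda)=4\pi\sum_j b_j(\lambda_j-\lambda)^{-1}\Phi_j$ is obtained exactly as you do, by evaluating $\mathsf q[\chi x^{-1},\Phi_j]$ via \eqref{q} and feeding the result into the spectral resolution of $(\Delta_F-\lambda)^{-1}$). Your preliminary checks that $\chi x^{-1}\in L^2(X)\cap\mathscr D^*\setminus\mathscr D$ and $(\Delta-\lambda)\chi x^{-1}\in L^2(X)$ are also the right ones and are sound.
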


\section{Explicit formula for  $b(-\infty)$}\label{Psec3}
In this section we first study the behaviour of the coefficient $b(\lambda)$ from \eqref{expY}  as $\lambda\to-\infty$ and obtain an explicit formula for the limit $b(-\infty)=\lim_{\lambda\to-\infty}b(\lambda)$. This is the core of the problem.  For singular  flat and round metrics the corresponding explicit formulas  were derived with the help of separation of variables~\cite{HK,HKK,KKIMRN}.  In our setting this approach does not work and we use a different idea:  In a neighbourhood of $x=0$  that shrinks to  zero as $\lambda\to-\infty$ we construct  an approximation for $Y(\lambda)$ that is sufficiently good to find $b(-\infty)$.

It is also important to notice that an expression for $b(-\infty)$ needs to be integrated (i.e. written as the derivative of some function with respect to  $z_k$) before it can be used in explicit formulas for the determinant  $\Det'\Delta_F$. This can be not an easy task even if an explicit expression for  $b(-\infty)$ was found  by separation of variables~\cite{Torus 2017} (e.g. separation of variables can be used to find $b(-\infty)$ if  $\mathsf m$ is a constant curvature singular metric)  and hence one may want to use the general integrated explicit expression given in Lemma~\ref{binfty} below.

\begin{lemma} \label{binfty} As $\lambda\to-\infty$ for the coefficient $b(\lambda)$ in~\eqref{expY} we have
\begin{equation*}
b(-\infty)=\partial_{z_k}\ln( \rho(z_k,\bar z_k))^{-1/4},\quad b(\lambda)=b(-\infty)+O(|\lambda|^{-1/2}),
\end{equation*}
where $ \rho(z_k,\bar z_k)$ is the value of the conformal factor of the metric $\mathsf m(z,\bar z)=\rho (z,\bar z) |dz|^2$  at the critical value $z_k$ of $f: X\to\Bbb CP^1$. 
\end{lemma}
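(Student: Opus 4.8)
\emph{Proof proposal.} The plan is to trade the global problem on $X$ for an explicitly solvable model problem on a neighbourhood of $P_k$ that shrinks as $\lambda\to-\infty$. First I would rewrite the equation $\Delta^*Y(\lambda)=\lambda Y(\lambda)$ in the distinguished parameter~\eqref{LocPar}: it reads $\partial_x\partial_{\bar x}Y(\lambda)=-\lambda\,\hat\rho(x,\bar x)\,|x|^2 Y(\lambda)$, where $\hat\rho(x,\bar x)=\rho(x^2+z_k,\bar x^2+\bar z_k)$ is smooth near $x=0$ and $\hat\rho(0,0)=\rho(z_k,\bar z_k)=:\alpha>0$. Writing $\lambda=-t$ with $t\to+\infty$ and rescaling $x=t^{-1/4}y$ balances the two sides, and the rescaled function $\widehat Y(y):=t^{-1/4}Y(t^{-1/4}y;-t)$ satisfies $\partial_y\partial_{\bar y}\widehat Y=\hat\rho(t^{-1/4}y,\overline{t^{-1/4}y})\,|y|^2\,\widehat Y$ together with $\widehat Y(y)=y^{-1}+t^{-1/4}c(-t)+t^{-1/2}a(-t)\bar y+t^{-1/2}b(-t)y+O(|y|^{2-\epsilon})$ by~\eqref{expY}. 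Hence $b(-t)=t^{1/2}[\widehat Y]_y$, where $[\,\cdot\,]_y$ denotes the coefficient of $y$ in the expansion at $y=0$, and the whole task becomes computing the leading asymptotics of $[\widehat Y]_y$.

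Next I would localize. Since $\sigma(\Delta_F)\subset[0,\infty)$ one has $\|(\Delta_F-\lambda)^{-1}\|\le|\lambda|^{-1}$ for $\lambda<0$, while standard Agmon-type estimates show that $Y(\lambda)$ is of size $e^{-c\sqrt{|\lambda|}}$ outside a small fixed neighbourhood of $P_k$. Therefore, up to an error exponentially small in $\sqrt{|\lambda|}$, near $P_k$ the function $Y(\lambda)$ coincides with the solution of the equation above posed on the whole $y$-plane that is asymptotic to $y^{-1}$ at the origin and decays at infinity; a growing component at infinity is excluded precisely because $\widehat Y$ must remain of size $e^{-c\sqrt{|\lambda|}}$ on the large rescaled coordinate patch. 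This is exactly the scheme of~\cite[Sec.~3]{KKIMRN}, which can be reused here.

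Then I would set up the perturbative hierarchy. Because $\hat\rho$ depends on $x$ only through $x^2$, Taylor expansion gives $\hat\rho(t^{-1/4}y,\overline{t^{-1/4}y})=\alpha+t^{-1/2}(\beta y^2+\bar\beta\bar y^2)+O(t^{-1})$ with $\beta:=\partial_z\rho(z_k,\bar z_k)$; correspondingly write $\widehat Y=\widehat Y_0+t^{-1/2}\widehat Y_1+O(t^{-1})$. The leading term solves $\partial_y\partial_{\bar y}\widehat Y_0=\alpha|y|^2\widehat Y_0$, is asymptotic to $y^{-1}$, and decays; since the potential is rotationally invariant and the flat-plane operator $-4\partial_y\partial_{\bar y}+4\alpha|y|^2$ (a harmonic oscillator) has no square-integrable zero mode, such a solution is unique up to a scalar and lies in the angular sector $e^{-i\theta}$, say $\widehat Y_0=e^{-i\theta}h(r)$ with $h''+r^{-1}h'-r^{-2}h-4\alpha r^2h=0$ and $h(r)=r^{-1}+O(r)$ decaying at $\infty$. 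In particular $[\widehat Y_0]_y=0$, so $b(-\infty)=[\widehat Y_1]_y$. The correction solves $\partial_y\partial_{\bar y}\widehat Y_1=\alpha|y|^2\widehat Y_1+(\beta y^2+\bar\beta\bar y^2)|y|^2\widehat Y_0$, whose forcing equals $\beta r^4e^{i\theta}h+\bar\beta r^4e^{-3i\theta}h$, so $\widehat Y_1=e^{i\theta}g(r)+e^{-3i\theta}g_3(r)$ and only the $e^{i\theta}$-part matters: $g$ solves $g''+r^{-1}g'-r^{-2}g-4\alpha r^2g=4\beta r^4h$, is bounded at $r=0$ and decays at $\infty$ (again unique, by the no-zero-mode argument), and $b(-\infty)$ is the coefficient of $r$ in $g$ at $r=0$.

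Finally I would solve this ODE in closed form. With $L:=\partial_r^2+r^{-1}\partial_r-r^{-2}-4\alpha r^2$, a short computation using $Lh=0$ produces the identity $L(r^3h'-r^2h)=32\alpha\,r^4h$. Hence $g=\tfrac{\beta}{8\alpha}(r^3h'-r^2h)$ solves $Lg=4\beta r^4h$; it is bounded at $r=0$, and since $h,h'$ decay like $e^{-\sqrt\alpha r^2}$ it decays at $\infty$, so by uniqueness it is the required solution. From $h=r^{-1}+O(r)$ one gets $r^3h'-r^2h=-2r+O(r^3)$, hence the coefficient of $r$ in $g$ equals $-\beta/(4\alpha)$, that is
\begin{equation*}
b(-\infty)=-\frac14\,\frac{\partial_z\rho(z_k,\bar z_k)}{\rho(z_k,\bar z_k)}=-\frac14\,\partial_{z_k}\ln\rho(z_k,\bar z_k)=\partial_{z_k}\ln\bigl(\rho(z_k,\bar z_k)\bigr)^{-1/4},
\end{equation*}
and tracking the $O(t^{-1})$ remainder through the local elliptic estimate that recovers $[\widehat Y]_y$ from $\widehat Y$ yields $b(\lambda)=b(-\infty)+O(|\lambda|^{-1/2})$. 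The hard part will be making the localization quantitative: building the cut-off approximant $\chi(x)\,t^{1/4}\bigl(\widehat Y_0+t^{-1/2}\widehat Y_1\bigr)(t^{1/4}x)$, controlling the $L^2$-norm of $(\Delta-\lambda)$ applied to it (the cut-off contributes only $e^{-c\sqrt{|\lambda|}}$, while truncating the expansion of $\hat\rho$ contributes the $O(|\lambda|^{-1/2})$), and converting the $L^2$-smallness of $Y(\lambda)$ minus this approximant into control of the single Taylor coefficient $b(\lambda)$ — essentially the bookkeeping of~\cite{KKIMRN} adapted to a general conformal factor $\rho$.
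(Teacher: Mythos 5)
Your computation is correct and lands on the right answer, and the overall strategy -- freeze the conformal factor at $P_k$, localize on a neighbourhood shrinking like $|\lambda|^{-1/4}$, and use the explicit model solution (your $\widehat Y_0=y^{-1}e^{-\sqrt\alpha|y|^2}$ is exactly the paper's $Y_0=x^{-1}e^{-k|x|^2}$ after your rescaling) -- is the same as the paper's. Where you genuinely diverge is in how the coefficient of $x$ is extracted. You build a first-order corrector $\widehat Y_1$ by solving the inhomogeneous radial ODE in closed form via the identity $L(r^3h'-r^2h)=32\alpha\,r^4h$ (which checks out), and read off $b(-\infty)=-\beta/(4\alpha)$ from the Taylor expansion of $g$. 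The paper instead never constructs a corrector: it writes $4\pi b(\lambda)=\mathsf q[\,Y-\chi_1Y_0,\overline Y\,]$ using the boundary form~\eqref{q}, which by Green's identity collapses to the single inner product $-\bigl(({\hat\rho_0}/{\hat\rho}-1)\chi_1\lambda Y_0,\overline{Y_0}\bigr)$ plus errors already controlled in $L^2$, and evaluates that as a Gaussian integral. Your route buys an explicit two-term asymptotic expansion of $Y$ itself (more information than is needed); the paper's route buys a shorter proof in which only $L^2$-level estimates are ever required.

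The one real gap is the step you yourself flag at the end: converting $L^2$-smallness of $Y$ minus the cut-off approximant into control of the single Taylor coefficient $b(\lambda)$. Local elliptic estimates on an annulus $|y|\sim 1$ give sup-norm control there, but a sup bound on an annulus does not by itself isolate the coefficient of $y$ in the expansion at the origin; you would still need a representation (Cauchy/Green type) that projects onto that coefficient using the fact that the error solves the eigenvalue equation with an $L^2$-controlled right-hand side. The cleanest such projection is precisely the pairing~\eqref{q}: since the difference lies in $\mathscr D$ and $\overline Y$ has a $\bar x^{-1}$ singularity, $\mathsf q[\,\cdot\,,\overline Y\,]$ picks out exactly the $x$-coefficient. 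So the missing ``bookkeeping'' is not optional fine print -- it is the step where the paper's mechanism does real work -- but it can be supplied, and once it is, your argument gives the same constant $\partial_{z_k}\ln(\rho(z_k,\bar z_k))^{-1/4}$ and the same $O(|\lambda|^{-1/2})$ error.
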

\begin{proof} Consider a metric plane $(\Bbb C, \mathsf m_0)$ isometric  to an euclidean cone of total angle $4\pi$. Let $\mathsf m_0(x,\bar x)=4\hat \rho_0|x|^2|dx|^2$ with $\hat\rho_0:=\rho(z_k,\bar z_k)$, i.e.  $\mathsf m_0(x,\bar x)$ is the metric $f^*\mathsf m(x,\bar x)=4\hat\rho(x,\bar x) |x|^2|dx|^2$  with conformal factor $\hat\rho(x,\bar x)$ frozen  at $x=0$.   Let
$$
Y_0(k,x,\bar x)=\frac {1}{x}e^{-kx\bar x}, \text{ where } k=\sqrt{-\hat\rho_0\lambda},\quad  0>\lambda\to-\infty.
$$ Clearly  $Y_0(x,\bar x; \lambda)=x^{-1}+O(1)$ as $x\to 0$  (cf.~\eqref{expY}) and $ \Delta_0 Y_0=\lambda Y_0$,
where $$\Delta_0=-\frac{1}{\hat\rho_0}\frac{1}{|x|^2}\partial_x\partial_{\bar x}$$ is the Laplacian  on $(\Bbb C, \mathsf m_0)$. In the remaining part of the proof we first show that  $Y_0$ is a sufficiently good approximation of $Y$   and then use $Y_0$ to calculate $b(-\infty)$.

Let $\tilde\chi_{j}\in C^\infty(\overline{\Bbb R^+})$, $j=1,2$, be  cut-off functions such that $\tilde\chi_j(t)=1$ for $t\leq j$ and $\tilde\chi_j(t)=0$ on $t\geq 3j/2$. For $x\in \Bbb C$ set $\chi_j(k,x,\bar x):=\tilde\chi_j(k^{1/4}|x|)$, where $k>0$ is a parameter. Clearly  $\chi_1\chi_2=\chi_1$ and the support of $\chi_{j}$ shrinks to $x=0$ as $k\to+\infty$ (i.e. as $\lambda\to -\infty$).

 For all sufficiently large $k>0$ we extend the functions $\chi_{j} Y_0$,  $(\Delta_0-\lambda)\chi_{j} Y_0=[\Delta_0,\chi_{j}] Y_0$ (here $[a,b]=ab-ba$), and $(\Delta-\Delta_0)\chi_{j}Y_0$ (considered as functions of the distinguished local parameter~\eqref{LocPar}) from their supports in a small vicinity of $x=0$ to $X$ by zero. From the explicit formula for $Y_0$ we immediately see that  
\begin{equation}\label{key1}
\|[\Delta_0,\chi_{j}] Y_0; L_2(X)\|=O(|\lambda|^{-\infty})\quad \text{ as }  \lambda\to-\infty. 
\end{equation}
Hence
$$
\begin{aligned}
(\Delta-\Delta_0)\chi_{j}Y_0 & =\bigl({\hat\rho_0}/{\hat\rho}-1\bigr)\Delta_0\chi_{j}Y_0\\& =\bigl({\hat\rho_0}/{\hat\rho}-1\bigr)(\chi_{j}\Delta_0Y_0 +[\Delta_0,\chi_{j}]Y_0)
\\
&=\bigl({\hat\rho_0}/{\hat\rho}-1\bigr)\chi_{j}\lambda Y_0+O(|\lambda|^{-\infty})\in L^2(X).
\end{aligned}
$$
On the next step we estimate $\|\bigl({\hat\rho_0}/{\hat\rho}-1\bigr)\chi_{j} \lambda Y_0; L^2(X)\|$.

On the support of $\chi_j$ we have $k^{1/4}|x|\leq  3j/2$. This together with
 $$
 {\hat \rho_0}/{\hat \rho(x,\bar x)}=1-\frac{\rho_{z_k}(z_k,\bar z_k)}{\hat \rho_0} x^2-\frac{\rho_{\bar z_k}(z_k,\bar z_k)}{\hat \rho_0} \bar x^2 +O(|x|^4)  
 $$
implies 
$$
 {\hat \rho_0}/{\hat \rho(x,\bar x)}-1=-\frac{\rho_{z_k}(z_k,\bar z_k)}{\hat \rho_0} x^2-\frac{\rho_{\bar z_k}(z_k,\bar z_k)}{\hat \rho_0} \bar x^2 +O(k^{-1}),\quad x\in\supp \chi_j.
 $$
After the change of variables $(x,\bar x)\mapsto (k^{-1/2}x,k^{-1/2}\bar x )$ in 
$$
\begin{aligned}
\|\bigl({\hat\rho_0}/{\hat\rho}-1\bigr)&\chi_{j} \lambda Y_0; L^2(X)\|^2\\&=\int_{\Bbb C} \bigl({\hat\rho_0}/{\hat\rho}(x,\bar x)-1\bigr)^2\tilde\chi_{j}^2(k^{1/4}| x|) |\lambda|^2 \frac{1}{|x|^2}e^{-2k|x|^2}\hat\rho(x,\bar x) 4|x|^2\frac{dx\wedge d\bar x}{-2i}
\end{aligned}
$$
we obtain
$$
\|\bigl({\hat\rho_0}/{\hat\rho}-1\bigr)\chi_{j} \lambda Y_0; L^2(X)\|^2\leq  C\int_{\Bbb C} k^{-2} |\lambda|^2e^{-2|x|^2}k^{-1}\frac{dx\wedge d\bar x}{-2i}=O(k)=O(|\lambda|^{1/2}).
$$
We demonstrated that 
\begin{equation}\label{key2}
\|(\Delta-\Delta_0)\chi_{j}  Y_0;L^2(X)\|=O(|\lambda|^{1/4})\text{ as } \lambda\to-\infty.
\end{equation}

Now we represent the function  $Y$ from~\eqref{Y} in the form 
\begin{equation}\label{tilde}
 Y=\chi_j Y_0+ ( \Delta_F -\lambda)^{-1}(\Delta-\lambda)\chi_j Y_0.
 \end{equation}
Since $$(\Delta-\lambda)\chi_j Y_0=(\Delta-\Delta_0)\chi_{j}  Y_0+[\Delta_0,\chi_j]Y_0,$$
 the estimates~\eqref{key1} and \eqref{key2} together with  $\| ( \Delta_F -\lambda)^{-1}; \mathcal B(L^2(X))\|=O(|\lambda|^{-1})$ and~\eqref{tilde} imply 
\begin{equation}\label{key3}
 \|Y-\chi_j Y_0; L^2(X)\|=O(|\lambda|^{-3/4}); 
 \end{equation}
 in this sense $Y_0$ is a good approximation of $Y$.

Let $b(\lambda)$ be the coefficient from the asymptotic \eqref{expY} of $Y$. 
We have
\begin{equation}\label{star1}
\begin{aligned}
4\pi b(\lambda)& =\mathsf q\left[Y-\chi_1 Y_0, \overline{Y}\right]=\left((\Delta-\lambda)(Y-\chi_1Y_0),\overline{Y}\right)=\left(-(\Delta-\lambda)\chi_1Y_0,\overline{Y}\right)\\
&=-\Bigl([\Delta_0,\chi_1] Y_0+(\Delta-\Delta_0)\chi_1 Y_0,
\\ &\qquad\qquad\qquad\overline{\chi_2  Y_0+ ( \Delta_F -\lambda)^{-1}([\Delta_0,\chi_{2}]Y_0+(\Delta-\Delta_0)\chi_{2}Y_0)}\Bigr).
\end{aligned}
\end{equation}
Thanks to~\eqref{key1}, \eqref{key2}, and \eqref{key3}, the right hand side of~\eqref{star1} takes the form
$$
-\left((\Delta-\Delta_0)\chi_1 Y_0,\overline{\chi_2  Y_0}\right)+O(|\lambda|^{-1/2})=-\Bigr(\bigl({\hat\rho_0}/{\hat\rho}-1\bigr)\chi_{1}\lambda Y_0, \overline{Y_0}\Bigr)+O(|\lambda|^{-1/2}).
$$
It remains to calculate the value of the inner product in the right hand side. We have
$$
-\Bigr(\bigl({\hat\rho_0}/{\hat\rho}-1\bigr)\chi_{1}\lambda Y_0, \overline{Y_0}\Bigr)=-\int_{\Bbb C}\Bigl(\hat\rho_0-\hat\rho(x,\bar x)\Bigr)\tilde\chi_1(k^{1/4}|x|)\lambda\frac{1}{x^2}e^{-2k|x|^2} 4 |x|^2\frac{dx\wedge d\bar x}{-2i}.
$$
Here we substitute $x=k^{-1/2}re^{i\phi}$ (recall that $\lambda=-k^2/\hat\rho_0$) and obtain
$$
\begin{aligned}
4 & \int_0^\infty\int_0^{2\pi} \Bigl(1-\hat\rho(k^{-1/2}r,\phi)/\hat\rho_0\Bigr)\tilde\chi_1(k^{-1/4}r)k e^{-2i\phi}e^{-2 r^2} r \,d\phi \,dr 
\\
& = -\frac{4}{\hat \rho_0}\int_0^\infty\int_0^{2\pi} \Bigl(\rho_{z_k}(z_k,\bar z_k)\frac{ r^2}{k}e^{2i\phi}+\rho_{\bar z_k}(z_k,\bar z_k)\frac{r^2}{k}e^{-2i\phi}+ O\Bigl(\frac {|r|^4}{k^2}\Bigr)\Bigr)
\\
& \qquad\qquad\qquad\qquad\qquad\qquad\qquad\qquad\qquad \times\tilde\chi_1(k^{-1/4}r)k e^{-2i\phi}e^{-2 r^2} r \,d\phi \,dr 
\\
& = -\frac{4}{\hat \rho_0}\int_0^\infty\Bigl(2\pi\rho_{z_k}(z_k,\bar z_k){ r^2}+ O\Bigl(\frac {|r|^4}{k}\Bigr)\Bigr)\tilde\chi_1(k^{-1/4}r) e^{-2 r^2} r  \,dr 
\\
& =-\frac{ 8\pi \rho_{z_k}(z_k,\bar z_k)}{\hat \rho_0}\int_0^\infty e^{-2 r^2} r^3  \,dr +O(k^{-1}) =-\pi \frac{\rho_{z_k}(z_k,\bar z_k)}{\rho(z_k,\bar z_k)}+O(|\lambda|^{-1/2}).
\end{aligned}
$$
\end{proof}

\section{Variation of eigenvalues under perturbation of conical singularities } \label{Psec4}

Pick a noncritical value  $z_0\in \Bbb C$ of $f$ such that the critical values $z_1,\dots,z_M$ of $f$ are the end points but not internal points of the line segments  $[z_0,z_k]$, $k=1,\dots,M$. The complement $X\setminus f^{-1}(\mathsf U)$ of the preimage  $f^{-1}(\mathsf U)$  of the unit $\mathsf U=\cup_{k=1}^M[z_0,z_k]$ in $X$ has $N$
connected components ($N$ sheets of the covering) and $f$ is a biholomorphic isometry from each of these components equipped with  metric $f^*\mathsf m$ to  $\Bbb CP^1\setminus \mathsf U$ equipped with metric $\mathsf m$.
 Thus the  Riemann manifold $(X, f^*\mathsf m)$ is isometric to the one obtained by  gluing $N$ copies of the Riemann sphere $(\Bbb CP^1,\mathsf m)$ along the cuts $\mathsf U$ in accordance with a certain gluing scheme.
Perturbation of the conical singularity at $P_k$  is  a small  shift by $w\in\Bbb C$, $|w|<<1$, of the end $z_k$ of the cut $[z_0,z_k]$ to $z_k+w$ on those two copies of the Riemann sphere $(\Bbb CP^1,\mathsf m)$ that  produce $4\pi$-conical angle at $P_k$ after gluing along  $[z_0,z_k]$.

Let $\varrho\in C^\infty_0(\Bbb R)$ be a cut-off function  such that $\varrho(r)=1$ for $x<\epsilon$ and $\varrho(r)=0$ for $r>2\epsilon$, where $\epsilon$ is small. Consider the selfdiffeomorphism
$$
\phi_w(z,\bar z)=z+\varrho(|z-z_k|)w
$$
of the Riemann sphere  $\Bbb CP^1$, where $w\in \Bbb C$ and $|w|$ is small. On those  two copies of the Riemann sphere  that   produce the conical singularity at $P_k$ after  gluing along  $[z_0,z_k]$,  we shift $z_k$ to $z_k+w$ by applying  $\phi_w$. 
We assume that the support of $\varrho$ and the value $|w|$ are so small that only $[z_0,z_k]$ and no other cuts are affected by $\phi_w$. Let  $(X,f_w^*\mathsf m)$ stand for the perturbed manifold, where $f_w: X\to \Bbb CP^1$ is the  meromorphic function with  critical values $z_1,\dots,z_{k-1}, z_k+w,z_{k+1},\dots,z_M$.
 Consider $(X,f_w^*\mathsf m)$ as  $N$ copies of the Riemann sphere $\Bbb CP^1$ glued along the (unperturbed) cuts $\mathsf U$, however  $N-2$ copies are endowed with metric $\mathsf m$ and $2$ certain  copies (mutually glued along $[z_0,z_k]$) are endowed with pullback $\phi_w^*\mathsf m$ of $\mathsf m$ by $\phi_w$. By $\Delta_w$ we denote the Friedrichs extension of Laplace-Beltrami operator on $(X,f_w^*\mathsf m)$ and consider $\Delta_w$ as a perturbation of $\Delta_0$ on $(X,f^*\mathsf m)$.

In a vicinity of $P_k$ for the matrix of $f^*_w\mathsf m $ we have $$[f_w^*\mathsf m](x,\bar x)=\rho\circ\phi_w(x^2-z_k,\bar x^2-\bar z_k) \left(\phi'_w(x,\bar x)\right)^*{\phi'_w(x,\bar x)}, $$
where $x$ is the local distinguished parameter~\eqref{LocPar}  and 
$$
\phi'_w(x,\bar x)=\Id+\frac{\varrho'(|x|^2)}{2|x|^2}\left[
\begin{array}{cc}
   w \bar x^2  & w x^2  \\

  \bar w\bar x^2   & \bar wx^2
\end{array}
\right]
$$
is the Jacobian matrix;  i.e. $
f_w^*\mathsf m=2[\bar xd\bar  x \  x d x][f_w^*\mathsf m] [xdx\ \bar xd\bar x]^T.
$

Notice that $f^*\mathsf m\neq f_w^*\mathsf m$ only in a small neighborhood of $P_k$, where $|x|\leq \sqrt{2\epsilon}$, and $f_w^*\mathsf m$ is not in the conformal class of $f^*\mathsf m$ only for $\sqrt{\epsilon}\leq|x|\leq  \sqrt{2\epsilon}$, where $\varrho'(|x|^2)\neq0$. The norms in the spaces $L^2(X,f^*\mathsf m)$ and $L^2(X,f_w^*\mathsf m)$ are equivalent and the spaces can be identified. 

A direct computation shows that 
\begin{equation}\label{D-D}
\begin{aligned}
\Delta_w -\Delta_0=&\left\{-\frac{\varrho(|x|^2)}{\hat\rho(x,\bar x)}\left(\frac{\hat\rho_x(x,\bar x)}{2x}w+ \frac{\hat\rho_{\bar x}(x,\bar x)}{2\bar x}\bar w\right) -\frac{\varrho'(|x|^2)}{2|x|^2}\bigl(\bar x^2w+ x^2\bar w\bigr)\right\}\Delta_{0} \\
&+\frac{1}{2|x|^2\hat\rho(x,\bar x)}\bigl(\partial_x\varrho'(|x|^2)w\partial_x +\partial_{\bar x} \varrho'(|x|^2)\bar w\partial_{\bar x}\bigr)+O(|w|^2),
\end{aligned}
\end{equation}
where $\hat\rho(x,\bar x)=\rho(x^2+z_k,\bar x^2+\bar z_k)$ as before and $O(|w|^2)$ stands for a second order operator with smooth coefficients  supported in $|x|\leq \sqrt{2\epsilon}$ and uniformly bounded by $C|w|^2$.
Now it is  easy to see that not the domain $\mathscr D^*_w$ of the operator adjoint to $\Delta_w\restriction_{C_0(X\setminus\{P_1,\dots,P_k\})}$  nor the  Sobolev space $H^1(X, f_w^*\mathsf m)$ (the domain of the closure of  quadratic form of $\Delta_w\restriction_{C_0(X\setminus\{P_1,\dots,P_k\})}$) depend on $w$. Hence  the domain $\mathscr D=\mathscr D^*_w\cap H^1(X, f_w^*\mathsf m)$ of the Friedrichs extension $\Delta_w$ does not depend on $w$ either. 
From now on we consider $\Delta_w$ as an  operator in  $L^2(X):=L^2(X,f^*\mathsf m)$ with domain $\mathscr D$.

 Let $\Gamma$ be a closed curve enclosing an eigenvalue $\lambda$ of $\Delta_0$ of multiplicity $m$ and  not any other eigenvalues of $\Delta_0$. The resolvent $(\Delta_w-\xi)^{-1}$ exists for all $\xi\in\Gamma$ provided $|w|$ is sufficiently small. Moreover,  as $|w|\to 0$ the difference  $(\Delta_w-\xi)^{-1}-(\Delta_0-\xi)^{-1}$ tends to zero in the norm of  $\mathcal B(L^2(X),\mathscr D)$ uniformly in $\xi\in\Gamma$; here  $\mathscr D$ is eqiped with the  graph norm of $\Delta_0$. 
 The continuity of the total projection 
 $$
P_w=-\frac{1}{2\pi i}\oint_\Gamma (\Delta_w-\xi)^{-1}\,d\xi
$$
implies that $\dim P_w L^2(X)=\dim P_0 L^2(X)=m$; i.e. the sum of multiplicities of the eigenvalues of $\Delta_w$ lying inside $\Gamma$ is equal to $m$ (provided $|w|$ is small). These eigenvalues are said to form a $\lambda$-group, see e.g.~\cite{Kato}.  
 
 \begin{lemma}\label{diff}  Consider a $\lambda$-group $\lambda_1(w),\dots,\lambda_m(w)$, here $\lambda_j(w)\to \lambda_j=\lambda$ as $w\to0$.  Let $\Phi_1,\dots \Phi_m$ be (real) normalized eigenfunctions of $\Delta_0$ corresponding to $\lambda$, i.e. $\Phi_j=\overline \Phi_j$, $\|\Phi_j; L^2(X)\|=1$, and $\operatorname{span}\{\Phi_1,\dots \Phi_m\}=P_0L^2(X)$. Then as $w\to 0$  we have
\begin{equation}\label{LGroup}
\sum_{j=1}^m\frac{1}{\bigl(\xi-\lambda_j(w)\bigr)^2}=\frac{m}{(\xi-\lambda)^2}+\frac{2(Aw+B\bar w )}{\bigl(\xi-\lambda\bigr)^3}+O(|w|^2),
\end{equation}
where $A=\sum_{j=1}^m A_j$ and $B=\sum_{j=1}^m B_j$ with coefficients  $A_j$ and $B_j$ from the expansion
\begin{equation}\label{CoeffAB}
\Bigl((\Delta_w-\Delta_0)\Phi_j,\Phi_j\Bigr)_{L^2(X)}=A_j w+B_j\bar w+O(|w|^2). 
\end{equation}

\end{lemma}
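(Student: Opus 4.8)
The plan is to use standard analytic perturbation theory for the $\lambda$-group, as in Kato, combined with the explicit first-order expression \eqref{D-D} for $\Delta_w-\Delta_0$. Since the domain $\mathscr D$ of $\Delta_w$ does not depend on $w$, the family $\Delta_w$ is a holomorphic family of type (A) in the real variables $\Re w, \Im w$ (more precisely, a differentiable family with $\Delta_w = \Delta_0 + w\,\Delta^{(1)} + \bar w\,\Delta^{(1')} + O(|w|^2)$ as bounded operators from $\mathscr D$ with the graph norm of $\Delta_0$ into $L^2(X)$, which is exactly what the uniform-in-$\xi$ convergence of the resolvent difference established just before the lemma provides). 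First I would introduce the reduced resolvent and the weighted mean $\hat\lambda(w) = \frac1m\sum_{j=1}^m \lambda_j(w)$, and recall the Kato formula for the total projection and the fact that $\sum_j \lambda_j(w) = -\frac{1}{2\pi i}\oint_\Gamma \xi\,(\Delta_w-\xi)^{-1}\,d\xi$ traced over the $m$-dimensional range.

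The key computational step is to expand $\sum_{j=1}^m (\xi - \lambda_j(w))^{-2}$ directly as a contour integral. Write $\sum_{j=1}^m (\xi-\lambda_j(w))^{-2} = -\frac{d}{d\xi}\sum_{j=1}^m (\xi-\lambda_j(w))^{-1} = -\frac{d}{d\xi}\,\Tr\bigl(-\frac{1}{2\pi i}\oint_{\Gamma'} (\eta-\xi)^{-1}(\Delta_w-\eta)^{-1}\,d\eta\bigr)$ for a small contour $\Gamma'$ around the $\lambda$-group, or more simply expand the resolvent in the Neumann/Born series $(\Delta_w-\xi)^{-1} = (\Delta_0-\xi)^{-1} - (\Delta_0-\xi)^{-1}(\Delta_w-\Delta_0)(\Delta_0-\xi)^{-1} + O(|w|^2)$, valid uniformly for $\xi$ outside a neighbourhood of $\lambda$, and take traces of the appropriate Riesz projectors. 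Restricting to the spectral subspace and using that $(\Delta_0-\xi)^{-1}$ acts as $(\lambda-\xi)^{-1}$ on $P_0 L^2(X)$, the $0$th order term contributes $m(\xi-\lambda)^{-2}$; the first order term in $w$ contributes $\frac{2}{(\xi-\lambda)^3}\Tr\bigl(P_0(\Delta_w-\Delta_0)P_0\bigr) + O(|w|^2)$, where the factor $2$ and the cube come from differentiating $(\xi-\lambda)^{-2}$ once more, i.e. from the double pole becoming a triple pole under one resolvent insertion. Finally $\Tr\bigl(P_0(\Delta_w-\Delta_0)P_0\bigr) = \sum_{j=1}^m \bigl((\Delta_w-\Delta_0)\Phi_j,\Phi_j\bigr)_{L^2(X)} = (A_1+\dots+A_m)w + (B_1+\dots+B_m)\bar w + O(|w|^2)$ by \eqref{CoeffAB}, which is basis-independent since $\{\Phi_j\}$ spans $P_0 L^2(X)$. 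Identifying $A = \sum A_j$, $B=\sum B_j$ gives \eqref{LGroup}.

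The one genuine subtlety, and the step I expect to be the main obstacle, is justifying that the Born expansion of the resolvent and the term-by-term trace manipulations are legitimate here despite the conical singularities and the fact that $\Delta_w-\Delta_0$ in \eqref{D-D} is a \emph{second-order} differential operator (not bounded on $L^2(X)$). The point is that $\Delta_w-\Delta_0$ \emph{is} bounded as an operator from $(\mathscr D, \|\Delta_0\cdot\|)$ into $L^2(X)$ — its second-order part is a bounded perturbation after composing with $\Delta_0^{-1}$ on the appropriate subspace — and this, together with the uniform resolvent bound $\|(\Delta_w-\xi)^{-1}\|_{\mathcal B(L^2(X),\mathscr D)} = O(1)$ for $\xi\in\Gamma$ already recorded above, is exactly what makes the perturbation series converge in $\mathcal B(L^2(X))$. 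So I would phrase the expansion as $(\Delta_w-\xi)^{-1} = (\Delta_0-\xi)^{-1} - (\Delta_0-\xi)^{-1}(\Delta_w-\Delta_0)(\Delta_0-\xi)^{-1} + O(|w|^2)$ where each $(\Delta_0-\xi)^{-1}$ to the right of $(\Delta_w-\Delta_0)$ is read as a map into $\mathscr D$, so that the composition with the differential operator $\Delta_w-\Delta_0$ makes sense and lands back in $L^2(X)$; the remainder is controlled because the coefficients of $\Delta_w-\Delta_0$ are $O(|w|)$ uniformly, per \eqref{D-D}.

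Once these analytic points are in place the rest is a short residue computation: both sides of \eqref{LGroup} are meromorphic in $\xi$ with poles only at $\lambda$ (for the right side) and at the $\lambda_j(w)$ (for the left side, which cluster near $\lambda$), and away from $\Gamma$ they agree up to $O(|w|^2)$ by the computation above, uniformly on compact subsets not meeting the $\lambda$-group. I would present the argument in the order: (i) recall domain-independence and the type-(A)/differentiable-family structure; (ii) write the total projection and the partial trace of the resolvent; (iii) insert the Born expansion and take the trace over $P_0 L^2(X)$; (iv) differentiate in $\xi$ to pass from first powers to second powers and collect the pole orders; (v) substitute \eqref{CoeffAB} and read off $A,B$. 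No step other than (i)–(iii), i.e. the functional-analytic justification of the expansion in this singular setting, requires real care; everything else is bookkeeping with poles.
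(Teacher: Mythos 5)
Your argument is correct and is essentially the same one the paper relies on: the paper's ``proof'' simply defers to Lemmas 5.1--5.3 of the cited reference [KKIMRN], whose content is exactly the standard Kato-type perturbation analysis of the $\lambda$-group (contour-integral/Born expansion of the resolvent, trace over the Riesz projector, reduction to $\Tr\bigl(P_0(\Delta_w-\Delta_0)P_0\bigr)$) that you write out, including the key point that $\Delta_w-\Delta_0$ is only relatively bounded, i.e.\ bounded from $\mathscr D$ with the graph norm into $L^2(X)$. No discrepancy to report.
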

\begin{proof} For the proof we refer to~\cite[Lemmas 5.1---5.3 ]{KKIMRN}.  In the proof of~\cite[Lemma 5.1]{KKIMRN} the formula~\eqref{CoeffAB} was obtained  (see\cite[f-la (28)]{KKIMRN}) and then the coefficients $A$ and $B$ were calculated for curvature $1$ conical metrics. That calculation does not work for metrics we study here and therefore should be omitted. With formulas for coefficients replaced by~\eqref{CoeffAB}   Lemmas  5.1---5.3 in~\cite{KKIMRN}  and their proofs can be repeated here verbatim. The assertion of this Lemma~\ref{diff}  is then the one of ~\cite[Lemma 5.3]{KKIMRN}. 
\end{proof}
 
 As a consequence of~\eqref{D-D} for the coefficients $A_j$ and $B_j$ in~\eqref{CoeffAB} we obtain
$$
A_j=-\int_{|x|\leq\sqrt{2\epsilon}}\left[ \Bigl(2\bar x\varrho(|x|^2)\hat\rho_x +2\bar x^2\varrho'(|x|^2)\hat\rho\Bigr) \lambda\Phi_j^2
+2\varrho'(|x|^2)\Bigl(\partial_x\Phi_j\Bigr)^2
 \right]\frac{dx\wedge d\bar x}{-2i},
$$
$$
B_j=-\int_{|x|\leq\sqrt{2\epsilon}}\left[ \Bigl(2x\varrho(|x|^2)\hat\rho_{\bar x}+2x^2\varrho'(|x|^2)\hat\rho\Bigr) \lambda\Phi_j^2
+2\varrho'(|x|^2)\Bigl(\partial_{\bar x}\Phi_j\Bigr)^2
 \right]\frac{dx\wedge d\bar x}{-2i}.
$$
This together with Stokes theorem gives
$$
A_j=i\lim_{\varepsilon\to0+}\oint_{|{x}|=\epsilon}\frac{1}{{x}}(\partial_{x}\Phi_j)^2\,d{x}-\lambda\Phi_j^2\hat \rho \bar x\, d\bar x,
$$
$$ 
B_j=-i\lim_{\varepsilon\to0+}\oint_{|{x}|=\epsilon}\frac{1}{\bar {x}}(\partial_{\bar {x}}\Phi_j)^2\,d\bar {x} -\lambda\Phi_j^2\hat \rho  x\, d x.
$$
Now we use the asymptotic~\eqref{Phi_j} of $\Phi_j(x,\bar x)$  to conclude that  $\Phi_j^2(x,\bar x)\hat \rho(x,\bar x) $ is bounded near $x=0$. By Lemma~\ref{expansion} the asymptotic can be differentiated. Thus for any $\epsilon>0$  we have $\partial_x\Phi_j=b_j+O(|x|^{1-\epsilon})$ as $x\to 0$.  We arrive at
 \begin{equation}\label{A_B}\begin{aligned}
A_j=i\lim_{\varepsilon\to0+}\oint_{|{x}|=\epsilon}\frac{1}{{x}}(\partial_{x}\Phi_j)^2\,d{x}=2\pi b_j^2,\\
B_j=-i\lim_{\varepsilon\to0+}\oint_{|{x}|=\epsilon}\frac{1}{\bar {x}}(\partial_{\bar {x}}\Phi_j)^2\,d\bar {x} =2\pi \bar b_j^2.
\end{aligned}
 \end{equation} 

\section{ Explicit formula for  $\Det' \Delta_F$}\label{Psec5}

In this subsection we first define the  modified (i.e. with zero eigenvalue excluded) zeta regularized determinant $\Det' \Delta_F$ and then prove the explicit formula~\eqref{teorema}. 

By~\cite[Theorem 4.1]{Troyanov Polar Coordinates}, near any conical singularity  of  $f^*\mathsf m$ there exist smooth local  polar geodesic coordinates $(r,\theta)$ such that   $$f^*\mathsf m=dr^2+h^2(r,\theta)d\theta^2,$$ where  
$$
\lim_{r\to0}\frac{h(r,\theta)}{r}=1,\quad  h_r(0,\theta)=1,\quad  h_{rr}(0,\theta)=0
$$
for  $\theta\in [0,\gamma)$; here $ h_r=\partial_r h$, $h_{rr}=\partial_r^2 h$, and $\gamma=2\pi(\beta_j+1)$ (resp. $\gamma=4\pi$)  if  the point $r=0$ is a preimage $f^{-1}(p_j)$ of conical singularity of $\mathsf m$ (resp.  a critical point of $f$). 

Let $\chi\in C^\infty(X)$ be a cutoff function supported in a small  neighbourhood $r<\epsilon$ of the conical singularity  and such that $\chi=1$ for $r<\epsilon/2$. Then $\chi\Delta_F$ can be  considered as the operator 
$$
\chi h^{-1/2}\left(-\partial_r^2+r^{-2}\mathcal A(r)\right)h^{1/2}
$$
acting in $L^2(h(r,\theta)\,dr \,d \theta)$, where 
$$
[0,\epsilon] \ni r\mapsto \mathcal A(r)=-r^2\left(\frac{ h_r^2}{4h^{2}}-\frac{ h_{rr}} {2 h}  +h^{1/2}\left(\frac{1}{h}\partial_\theta\right)^2h^{-1/2}\right)
$$
is a smooth family of operators on the circle $\Bbb R/\gamma\Bbb Z$. Therefore the results of~\cite{BS2} are applicable.  By~\cite[Thm~5.2 and Thm 7.1]{BS2} we have 
\begin{equation}\label{HA}
\Tr \chi e^{-\Delta_F t}\thicksim\sum_{j=0}^\infty A_j t^{\frac {j-3}{2}} +\sum_{j=0}^\infty B_j t^{-\frac{\alpha_j+4}{2}}+\sum_{j: \alpha_j\in\Bbb Z_- } C_j  t^{-\frac{\alpha_j+4}{2}} \log t\quad \text{    as } t\to0+
\end{equation}
with some coefficients $A_j$, $B_j$, and $C_j$,  and a sequence of complex numbers  $\{\alpha_j\}$,  $\Re\alpha_j\to-\infty$. The coefficient $C_j$ before $t^0\log t$ in~\eqref{HA} is given by
$\frac1 4\Res \zeta(-1)$, where $\zeta$ is the $\zeta$-function of $(\mathcal A(0)+1/4)^{1/2}$; see \cite[f-la (7.24)]{BS2}. Since $\mathcal A(0)=-\partial^2_\theta-1/4$,  we have
$$\zeta(s)=2\sum_{j\geq1} (\pi j/\gamma)^{-s}=2 (\gamma/\pi)^{s}\zeta_R(s),$$
where $\zeta_R$ stands for the Riemann zeta function. Thus $\Res \zeta(-1)=0$ and  the coefficient $C_j$ before  $t^0\log t$  is  zero.

For any cut-off function $\chi\in C^\infty(X)$ supported outside of conical singularities of $f^*\mathsf m $  the short time asymptotic  $\Tr \chi e^{-\Delta_F t}\sim \sum_{j\geq -2} a_jt^{j/2} $ can be obtained in the standard well known way, see e.g.~\cite[Problem 5.1]{Shubin} or~\cite{Seeley}.  

In summary, there is no term $C_j t^0\log t$ in the short time asymptotic of $\Tr e^{-\Delta_F t}$ and hence the $\zeta$-function $$\zeta(s)=\frac{1}{\Gamma(s)}\int_0^\infty t^{s-1}(\Tr e^{-t\Delta_F} -1)\,dt $$ has no pole at zero. Now we are in position to define  the modified determinant $$\Det' \Delta_F: =\exp\{-\zeta'(0)\}.$$

Chose oriented paths  $a_1,\dots, a_g$ and $b_1,\dots, b_g$ marking the Riemann surface $X$ of genus $g$. Let $\{v_1, \dots,
v_g\}$ be the normalized basis of holomorphic differentials on $X$, i.e. 
$\int_{a_\ell}v_m=\delta_{\ell m}$, 
where $\delta_{\ell m}$ is the Kronecker delta. Introduce the $g\times g$-matrix $\mathbb B=(\mathbb B_{\ell m})$ of $b$-periods with entries ${\mathbb B}_{\ell m}=\int_{b_\ell}v_m$. The following theorem is the main result of this paper.

\begin{theorem}\label{MAINTHM}  Let $\mathsf m$ stand for a conical metric on the Riemann sphere $\Bbb CP^1$ such that the curvature of $\mathsf m$ is smooth and the volume is finite.   Consider  a meromorphic function $f: X\to {\Bbb  C}P^1$  of degree $N$  with simple poles and $M=2N+2g-2$ simple critical points $P_k$. Assume that the critical values $z_k=f(P_k)$ of $f$ are finite and do not coincide with conical singularities of $\mathsf m$.  Consider the determinant ${\Det}'\Delta_F$ of the Friedrichs extension  $\Delta_F$ of the Laplacian on  $(X,f^*{\mathsf m})$ as a functional on the Hurwitz moduli space $H_{g,N}(1,\dots,1)$ of pairs $(X,f)$ with local coordinates $z_1,\dots, z_M$. Then the    explicit formula
$$
{\Det}'\Delta_F=C\,{\det}\Im {\mathbb B}\, |\tau|^2 \prod_{k=1}^M\sqrt[8]{\rho(z_k,\bar z_k)}
$$
holds true, where $C$ is independent of the point $(X,f)$  of   $H_{g,N}(1,\dots,1)$, $\Bbb B$ is the matrix of $b-$periods on $X$, and $\tau$ is the Bergman $\tau$-function on $H_{g,N}(1,\dots,1)$.  (The factor ${\det}\Im {\mathbb B}$ should be omitted if   $X$ is  a genus zero surface.) 
\end{theorem}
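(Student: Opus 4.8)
The plan is to combine the variational formula for $\log\Det'\Delta_F$ in the moduli parameters $z_k$ (produced via the resolvent coefficient $b(\lambda)$) with the explicit value of $b(-\infty)$ from Lemma~\ref{binfty}, and then identify the resulting expression with the variation of the Bergman $\tau$-function. First I would relate the derivative $\partial_{z_k}\log\Det'\Delta_F$ to the spectral data. Combining Lemma~\ref{diff} and~\eqref{A_B}, the coefficient governing the first-order motion of the $\lambda$-group as $z_k$ varies is $A=2\pi\sum_j b_j^2$, where the $b_j$ are the expansion coefficients of the normalized eigenfunctions at $P_k$ as in~\eqref{Phi_j}. By Lemma~\ref{Eigenf}, the generating function $16\pi^2\sum_j b_j^2/(\lambda_j-\lambda)^2=(Y(\lambda),\overline{Y(\lambda)})$, and by Lemma~\ref{b(lambda)} this equals $4\pi\,b'(\lambda)$. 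Integrating in $\lambda$ from $-\infty$, where $b(\lambda)\to b(-\infty)$, one expresses $\sum_j b_j^2/(\lambda_j-\lambda)$ in terms of $b(\lambda)-b(-\infty)$; feeding this into the heat-kernel/$\zeta$-regularization bookkeeping (the term $C_j t^0\log t$ vanishes, as established just before the theorem, so $\zeta(s)$ is regular at $0$ and $\Det'\Delta_F$ is well defined and varies smoothly) yields a clean formula of the shape
\begin{equation*}
\partial_{z_k}\log\Det'\Delta_F = -b(-\infty) + (\text{contribution of the zero mode}) = \partial_{z_k}\log\bigl(\rho(z_k,\bar z_k)\bigr)^{-1/4} + \cdots .
\end{equation*}
This is the step where one must be careful about the rank-one nature of the perturbation at each $P_k$ and about the separate treatment of the zero eigenvalue $\lambda_0=0$ (its eigenfunction is constant, its $b_0$ vanishes, and the volume $\Vol_{f^*\mathsf m}(X)=N\Vol_{\mathsf m}(\Bbb CP^1)$ is a constant on the Hurwitz space, so the zero-mode correction is $z_k$-independent).

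Next I would assemble the full variation over all $M$ critical values. Summing the one-variable computation over $k=1,\dots,M$ and taking the conjugate variable into account, one obtains
\begin{equation*}
\partial_{z_k}\log\Det'\Delta_F \;=\; \partial_{z_k}\log\Bigl(\,{\det}\Im\mathbb B\,|\tau|^2\prod_{\ell=1}^M(\rho(z_\ell,\bar z_\ell))^{1/8}\Bigr),\qquad k=1,\dots,M,
\end{equation*}
together with the conjugate identities. The contribution $\partial_{z_k}\log(\rho(z_k,\bar z_k))^{1/8}$ comes directly from $b(-\infty)$ via Lemma~\ref{binfty}. The remaining ``metric-independent'' piece $\partial_{z_k}\log(\det\Im\mathbb B\,|\tau|^2)$ must be matched against the part of the spectral variation coming from the conformal class of $f^*\mathsf m$ alone (i.e.\ the part that survives when $\mathsf m$ is, say, the round metric). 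Here I would invoke the already-established curvature-one case~\cite{KKIMRN}: by setting $\rho(z,\bar z)=4(1+|z|^2)^{-2}$ the theorem reduces to the main result of~\cite{KKIMRN}, which precisely identifies that piece with $\det\Im\mathbb B\,|\tau|^2$ via the Polyakov-type anomaly formula and the known variational formula for the Bergman $\tau$-function on Hurwitz spaces (\cite{KK1,KS,KK,KKZ}). Concretely, writing $f^*\mathsf m = e^{2\varphi}f^*\mathsf m_{\mathrm{round}}$ with $\varphi$ a bounded function on $X$ smooth away from the singularities but with the \emph{same} conical orders at the $P_k$ (the $4\pi$ cone angles are common to both metrics, only the conformal factor $\hat\rho$ differs), the Polyakov anomaly contributes exactly the boundary/conical terms $\prod_k(\rho(z_k,\bar z_k)/\rho_{\mathrm{round}}(z_k,\bar z_k))^{1/8}$ relative to the round case, with no bulk term because the $\zeta$-function has no pole at zero; this is the algebraic heart of why the answer factorizes as in~\eqref{teorema}.

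Finally, from equality of all first derivatives $\partial_{z_k}$ and $\partial_{\bar z_k}$ on the connected complex manifold $H_{g,N}(1,\dots,1)$, I conclude that $\log\Det'\Delta_F - \log(\det\Im\mathbb B\,|\tau|^2\prod_k(\rho(z_k,\bar z_k))^{1/8})$ is locally constant, hence globally constant since the Hurwitz space is connected, which gives the claimed formula with a single constant $C$. For $g=0$ the matrix $\mathbb B$ is empty and the factor $\det\Im\mathbb B$ is dropped by convention, consistent with the genus-zero normalization of $\tau$ in~\cite{KK1,KS}. I expect the main obstacle to be the careful justification that the spectral variation formula has \emph{no} hidden $z_k$-dependent contribution beyond $b(-\infty)$ and the $\tau$-function term --- in particular, controlling the exchange of $\lambda\to-\infty$ limits with the $\zeta$-regularized sum over eigenvalues, and verifying that the zero mode and the pieces of the heat trace asymptotics at the conical points (which were shown above to carry no $t^0\log t$ term) indeed produce only moduli-independent constants. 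Once that is secured, matching with~\cite{KKIMRN} and the $\tau$-function literature is essentially bookkeeping.
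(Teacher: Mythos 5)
Your first half is on the paper's track, but the execution skips the step that actually produces both the exponent $1/8$ and the $\tau$-function factor. In the paper one computes, via Lemmas~\ref{diff}, \ref{Eigenf}, \ref{b(lambda)} and a contour integral over $\Gamma_\lambda$,
\[
\partial_{z_k}\zeta'(0;\Delta_F)=\frac{1}{4\pi i}\int_{\Gamma_\lambda}(\xi-\lambda)^{-1}\{b(\xi)-b(-\infty)\}\,d\xi=\frac{b(-\infty)-b(0)}{2},
\]
so that $\partial_{z_k}\ln{\Det}'\Delta_F=\tfrac12\bigl(b(0)-b(-\infty)\bigr)$. The factor $\tfrac12$ coming from this Cauchy evaluation is exactly what turns $b(-\infty)=\partial_{z_k}\ln(\rho(z_k,\bar z_k))^{-1/4}$ into the exponent $1/8$; your displayed formula ``$-b(-\infty)+(\text{zero mode})$'' drops it. More importantly, the second term is $b(0)$, the value at $\lambda=0$ of the coefficient in the expansion~\eqref{expY} of $Y(\lambda)$ --- not a correction coming from the excluded zero eigenvalue. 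Since $Y(0)$ is harmonic and bounded on $X$ away from $P_k$, the coefficient $b(0)$ is conformally invariant, and the identity $b(0)=\partial_{z_k}\ln({\det}\Im{\mathbb B}\,|\tau|^2)$ follows from the Rauch-type variational formulas for $\mathbb B$ and the Bergman $\tau$-function exactly as in \cite[Lemma 4.2]{KKIMRN} or \cite[Prop.~6]{HKK}. That is how the $\tau$-function enters the paper's proof; no conformal-anomaly comparison is needed.

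Your alternative route for that second piece --- a Polyakov-type comparison of $f^*\mathsf m$ with $f^*\mathsf m_{\mathrm{round}}$ --- has genuine gaps as stated. First, ``no bulk term because the $\zeta$-function has no pole at zero'' is false: the Polyakov bulk integral does not vanish; it happens to be moduli-independent because $\varphi=\psi\circ f$ pushes down to $N$ times a fixed integral over $\Bbb CP^1$, but that is a separate argument you would have to supply. Second, $\varphi$ is not bounded: at the preimages $f^{-1}(p_j)$ the metric $f^*\mathsf m$ has cone angle $2\pi(\beta_j+1)$ while $f^*\mathsf m_{\mathrm{round}}$ is smooth there, so $e^{2\varphi}\sim|f(P)-p_j|^{2\beta_j}$, and these points move on the Hurwitz space, so their contribution to any anomaly formula must be shown to be moduli-independent. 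Third, the assertion that the conical contribution at the $4\pi$-cones is exactly $\prod_k(\rho/\rho_{\mathrm{round}})^{1/8}$ is the hard analytic content --- it is essentially equivalent to Lemma~\ref{binfty} --- and cannot be cited as an off-the-shelf fact for non-constant-curvature conical metrics. Your final step (integrating the system of first-order equations over the connected Hurwitz space to get a single constant $C$) matches the paper and is fine.
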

\begin{proof} The proof runs along the lines of ~\cite[Proposition 6.1 and Theorems 6.2 and 6.3]{KKIMRN} with some minor changes due to replacement of the formula $b(-\infty)=\frac{1}{2}\frac{\bar z_k}{1+|z_k|^2}$ (valid in the case of standard round metric $\mathsf m$ on $\Bbb CP^1$) by  the general formula obtained in Lemma~\ref{binfty}. Notice also that the final results of Section~\ref{Psec4}, i.e. f-las~\eqref{LGroup} and~\eqref{A_B},   turn out to be the same as in the case of standard round metric, thus requiring no  changes in the proof. For these reasons we only outline the main steps of the proof below  and refer to~\cite[Proposition 6.1 and Theorems 6.2, 6.3]{KKIMRN} for  details.

First we  compute the partial derivative of zeta function with respect to $z_k$. Let $\Gamma_\lambda$ be a contour running clockwise at a sufficiently small distance $\epsilon>0$ around the cut $(-\infty, \lambda]$. Thanks to  Lemma~\ref{diff} and~\eqref{A_B} we obtain
$$\partial_{z_k} \zeta(s;\Delta_F-\lambda) =\frac{1}{2\pi i (s-1)}\int_{\Gamma_\lambda}(\xi-\lambda)^{1-s}\partial_{z_k}\Tr(\Delta_F-\xi)^{-2}\,d\xi
$$
$$
=\frac{2i}{  (s-1)}\int_{\Gamma_\lambda}(\xi-\lambda)^{1-s} \sum_{j=0}^\infty \frac{b_j^2}{\bigl(\lambda_j-\xi\bigr )^3}\,d\xi.
$$
We integrate in the right hand side by parts and then use the equality~\eqref{a_j} from Lemma~\ref{Eigenf} together with Lemma~\ref{b(lambda)} and the estimate from Lemma~\ref{binfty}. We  get
$$\begin{aligned}
\partial_{z_k} \zeta(s;\Delta_F-\lambda) & =\frac{-i}{16\pi^2}\int_{\Gamma_\lambda}(\xi-\lambda)^{-s}\bigl(Y(\xi),\overline{Y(\xi)}\bigr)\,d\xi\
\\
&=\frac{-i}{4\pi}\int_{\Gamma_\lambda}(\xi-\lambda)^{-s}
\frac{d}{d\xi}\left\{b(\xi)-b(-\infty)\right\}\,d\xi\\&=\frac{-is}{4\pi}\int_{\Gamma_\lambda}(\xi-\lambda)^{-s-1}\left\{b(\xi)-b(-\infty)\right\}\,d\xi.
\end{aligned}
$$
Since $\xi\mapsto b(\xi)$ is holomorphic in $\Bbb C\setminus\sigma(\Delta_F)$ and in a neighbourhood of zero (Lemma~\ref{b(lambda)}), the Cauchy Theorem implies
$$
\partial_{z_k} \zeta'(0;\Delta_F) =\frac{1}{4\pi i}\int_{\Gamma_\lambda}(\xi-\lambda)^{-1}\{b(\xi)-b(-\infty)\}\,d\xi=\frac{b(-\infty)-b(0)}{2}\,.
$$
 The coefficient  $b(0)$ is conformally invariant ($Y(0)$ is a harmonic function bounded everywhere  on $X$ except for the point $P_k$ ; see~\eqref{Y},~\eqref{expY},  and Lemma~\ref{b(lambda)}) and thus the equality
$$
b(0)=\partial_{z_k} \ln ({\det} \Im {\mathbb B}\,|\tau|^2)$$
 can be obtained in exactly the same way as in~\cite[Lemma 4.2]{KKIMRN} or~\cite[Prop. 6]{HKK}.  Since $\Det'\Delta=\exp\{-\zeta'(0)\}$,  this together with  formula for  $b(-\infty)$ (see Lemma~\ref{binfty}) gives
\begin{equation}\label{System}
\partial_{z_k} \ln {\Det}'\Delta_F=\partial_{z_k} \ln ({\det} \Im {\mathbb B}\,|\tau|^2)-\partial_{z_k}\ln( \rho(z_k,\bar z_k))^{-1/8}, \quad k=1, \dots, M.
\end{equation}
 Similarly,  
$\partial_{\bar z_k} \ln {\Det}'\Delta_F$ is given by the conjugate of the right hand side in~\eqref{System}. This completes the proof. 

\end{proof}

{\bf Acknowledgements.} 
It is a pleasure to thank  Alexey Kokotov for helpful discussions and important remarks.

\end{document}